\newcommand{\e}{\varepsilon}
\newcommand{\R}{\mathbb{R}}
\newcommand{\N}{\mathbb{N}}
\newcommand{\de}{\partial}
\newcommand{\D}{{\mathfrak{D}}}
\newcommand{\hsp}{\hspace{0.2cm}}
\newcommand{\ff}{\mathbb{I}}
\newcommand{\ffz}{\mathbb{I}^\circ}
\newcommand{\sff}{\mathbb{I}}
\newcommand*{\abs}[1]{\left\vert #1\right\vert}
\newcommand*{\norm}[1]{\left\Vert #1\right\Vert}
\newtheorem{theorem}{Theorem}[section]
\newtheorem*{theorem*}{Theorem}
\newtheorem{lemma}[theorem]{Lemma}
\newtheorem{proposition}[theorem]{Proposition}
\theoremstyle{definition}
\title[Non-degeneracy of critical points of the second fundamental form]{Non-degeneracy of critical points of the squared norm of the second fundamental form on manifolds with minimal boundary}
\author{Sergio Cruz-Blázquez}
\address{Sergio Cruz Blázquez, Dipartimento di Matematica, Università degli Studi di Bari, Via E. Orabona, 4, 70125 Bari (Italy)}
\email{sergio.cruz@uniba.it}
\author{Angela Pistoia}
\address{Angela Pistoia, Dipartimento SBAI,  Sapienza Università di Roma, Via Antonio Scarpa 16
	00161 Roma (Italy)}
\email{angela.pistoia@uniroma1.it}
\date\today
\subjclass[2010]{58J60, 53C21}
\keywords{Prescribed curvature problem, conformal metric, Morse function}
 \thanks{ 
 S. Cruz has been partially supported by INdAM – GNAMPA Project 2022 “Fenomeni di blow-up per equazioni non lineari”, E55F22000270001S and by PRIN 2017JPCAPN. A. Pistoia has been partially supported by  GNAMPA, Italy as part of INdAM. }
\begin{document}
\maketitle
\begin{abstract}
Let $(M,\bar g)$ be a compact Riemannian manifold with minimal boundary such that the second fundamental form is nowhere vanishing on $\de M$. We show that for a generic Riemannian metric $\bar g$, the squared norm of the second fundamental form is a Morse function, i.e. all its critical points   are non-degenerate. We show that the generality of this property holds when we restrict ourselves to the conformal class of the initial metric on $M$.
\end{abstract}
\section{Introduction}
Let $(M^n,\bar g)$ be a compact $C^m$ manifold with $m\geq 3$ of dimension $n\geq 3$ with minimal boundary $\de M$, i.e. the boundary mean curvature is identically zero on $\de M$. Assume that the second fundamental form $\sff_{\bar g}$ of $\de M$  is nowhere vanishing.  Our goal is to show that for a generic metric $\bar g$ with said properties, all the critical points of the squared norm of $\sff_{\bar g}$ are non-degenerate.

\medskip 

In a local coordinate system, if we let $h_{ij}$ denote the components of $\sff_{\bar g},$ this function is defined as
\begin{equation}\label{def}
\norm{\sff_{\bar g}}^2=\bar g^{ik}\bar g^{j\ell}h_{ij}h_{k\ell}.
\end{equation}
Consider $\mathscr M^m$ the set of all $C^m$ Riemannian metrics on $M$, and define 
$$\mathscr E^m = \{\bar g\in \mathscr{M}^m:\:\sff_{\bar g}(p)\neq 0\hsp \forall p\in \de M\}.$$
We further assume that $M$ is such that $\mathscr{E}^m$ is a non-empty open subset of $\mathscr{M}^m$. Our first result reads as
\begin{theorem}\label{theorem1} The set 
	\begin{equation*}\begin{split}
			\mathcal{A}=\big\lbrace &g\in \mathscr{E}^m:\:(M,g)\hsp\text{has minimal boundary}\hsp\text{and}\hsp\norm{\sff_g}^2\\&\text{has only non-degenerate critical points on}\hsp\de M\big\rbrace
	\end{split}\end{equation*} 
	is an open dense subset of $\:\left\lbrace g\in \mathscr{E}^m:\:(M,g)\hsp\text{has minimal boundary}\right\rbrace$.
\end{theorem}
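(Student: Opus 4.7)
The plan is to prove openness and density of $\mathcal{A}$ separately, following the classical Sard--Smale transversality strategy for generic-Morse theorems.

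For \emph{openness}, I would fix $g_0\in \mathcal{A}$ and use compactness of $\partial M$: the critical set of $\|\sff_{g_0}\|^2$ is finite and each point is non-degenerate. The assignments $g\mapsto d\|\sff_g\|^2$ and $g\mapsto \nabla^2_{\partial M}\|\sff_g\|^2$ are continuous from $\mathscr{E}^m$ into the relevant $C^k$-spaces. The implicit function theorem applied at each non-degenerate critical point produces a unique nearby critical point of $\|\sff_g\|^2$ with Hessian close to the original, hence non-degenerate; a standard compactness argument rules out spurious critical points elsewhere. The conditions $\sff_g\neq 0$ and $H_g\equiv 0$ are likewise preserved under small perturbations.

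For \emph{density}, I would work within the Banach manifold
\[
\mathcal{N}^m = \{g\in \mathscr{E}^m:\ H_g\equiv 0\text{ on }\partial M\},
\]
which is a smooth submanifold of $\mathscr{E}^m$ because $g\mapsto H_g$ is a submersion into $C^{m-1}(\partial M)$; indeed, perturbing only the first normal derivative of $g$ along $\partial M$ produces arbitrary perturbations of $H_g$. Define
\[
\mathcal{F}\colon \mathcal{N}^m\times \partial M\longrightarrow T^*\partial M,\qquad (g,p)\longmapsto d\|\sff_g\|^2(p).
\]
If $\mathcal{F}$ is transverse to the zero section then $Z=\mathcal{F}^{-1}(0_{\partial M})$ is a Banach submanifold, and a direct dimension count shows that the projection $\pi\colon Z\to \mathcal{N}^m$ is Fredholm of index $0$. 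By the Sard--Smale theorem the set of regular values of $\pi$ is residual, hence dense. For such a $g$, the fibre $\pi^{-1}(g)$ is $0$-dimensional, so the critical points of $\|\sff_g\|^2$ are isolated, and the transversality condition for the fibre map $p\mapsto d\|\sff_g\|^2(p)$ at each such $p$ is precisely non-degeneracy of the Hessian.

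The main obstacle is verifying transversality of $\mathcal{F}$ to the zero section on the constrained manifold $\mathcal{N}^m$. At a point $(g_0,p_0)$ with $d\|\sff_{g_0}\|^2(p_0)=0$, I would work in Fermi coordinates $(x',x^n)$ near $p_0$ and consider perturbations of the form $\dot g=\chi(x^n)\,\dot h_{ij}(x')\,dx^i\otimes dx^j$ with cutoff $\chi$ satisfying $\chi(0)=0$ and $\chi'(0)\neq 0$. Such $\dot g$ leaves the induced boundary metric unchanged and modifies the second fundamental form proportionally to $\dot h_{ij}$, so the linearized minimal-boundary condition reduces to the pointwise constraint $g^{ij}\dot h_{ij}=0$. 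The induced linearization of $\|\sff_g\|^2$ along $\partial M$ is a nonzero multiple of $g^{ik}g^{j\ell}h_{ij}\dot h_{k\ell}$; restricting to variations $\dot h$ vanishing at $p_0$ with prescribed first tangential derivatives, the vertical component of $d\mathcal{F}(\dot g,0)$ at $p_0$ depends linearly on $\partial_k\dot h_{ab}(p_0)$ through the pairing with $h^{ab}(p_0)$. The minimal-boundary assumption forces $\mathrm{tr}_g h=0$ on $\partial M$, and the hypothesis $\sff_{g_0}(p_0)\neq 0$ ensures that $h(p_0)$ is a nonzero trace-free tensor, hence not proportional to $g(p_0)$; consequently the pairing is surjective on the trace-free slice, realising any prescribed covector in $T^*_{p_0}\partial M$. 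Executing this linear-algebraic step uniformly across $\partial M$ and globalising via the cutoff $\chi$ is the technical crux of the proof.
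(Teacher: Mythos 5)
Your overall strategy -- openness via the implicit function theorem plus a compactness argument to exclude spurious critical points, density via a Sard--Smale/transversality theorem on the constrained Banach manifold $\mathcal N^m=\{g:H_g\equiv 0\}$ -- is precisely the one the paper follows (the paper invokes Henry's transversality theorem rather than the plain Sard--Smale theorem, which additionally handles the $\sigma$-properness step (T-3) that you leave implicit, but this is a routine technicality).

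Where you genuinely diverge, and in a way worth noting, is in the proof of the key transversality condition. The paper chooses tensors of the form $k_{k\ell}(x,t)=c_{k\ell}(x)\exp\bigl(-\int_0^tf_{k\ell}(x,\tau)\,d\tau\bigr)$, which solve componentwise the ODE that guarantees tangency to $\mathcal N^m$; because $\partial_t k(0,0)=0$ but $\partial_s k(0,0)=\partial_s c(0)\neq 0$, these variations perturb the boundary metric to first order while leaving the second fundamental form at $p_0$ unchanged, and the final surjectivity hinges on an algebraic claim (equation \eqref{c1}) that some $\sum_k h^0_{i^*k}h^0_{j^*k}-2(h^0_{i^*j^*})^2$ is nonzero. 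Your variations $\dot g=\chi(x^n)\,\dot h_{ij}(x')\,dx^i\otimes dx^j$ with $\chi(0)=0$, $\chi'(0)\neq 0$ do the opposite: they freeze the induced boundary metric and perturb only the second fundamental form. This turns the linearized minimal-boundary constraint into the pointwise trace-free condition $g^{ij}\dot h_{ij}=0$, and since $H_{g_0}=0$ forces $h^0$ itself to be trace-free and nonzero, you can simply take $\partial_s\dot h_{ab}(p_0)$ proportional to $h^0_{ab}(p_0)$ to realize any prescribed covector. This avoids the paper's auxiliary algebraic claim and its long index computation entirely; it is a cleaner route, and it exposes more transparently why the nondegeneracy of $\sff_{g_0}$ (as a trace-free tensor) is exactly the needed nondegeneracy in the linear algebra. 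The only small caveat is that, to make $\dot h$ trace-free on a whole neighborhood rather than just at $p_0$, you should subtract the pure-trace part $\frac{1}{n-1}(g_0^{k\ell}\dot h_{k\ell})\,(g_0)_{ij}$; because this correction and its first tangential derivative both vanish at $p_0$ under your choice $\dot h(p_0)=0$, $\partial_s\dot h(p_0)\propto h^0(p_0)$, the conclusion is unaffected.
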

A well-known result by Escobar \cite[Lemma 1.1]{Escobar92-2} guarantees that every compact Riemannian manifold $(M,\bar g)$ of dimension $n\geq 3$ with boundary admits a conformal metric with zero boundary mean curvature and whose scalar curvature does not change sign. Moreover, if $\bar g\in \mathscr{E}^m,$ every conformal metric to $\bar g$ is also in $\mathscr{E}^m$. In the context of Conformal Geometry, a natural assumption to work with is that the initial metric $\bar g$ is the one given by Escobar's Lemma, so in order to preserve this geometrical meaning, we also establish the following result:

\begin{theorem}\label{theorem2} The set
\begin{equation*} 
\begin{split}
\mathcal{B}=\big\lbrace &g\in[\bar g]:\:(M,g)\hsp\text{has minimal boundary}\hsp\text{and}\hsp\norm{\sff_g}^2\\ &\text{has only non-degenerate critical points on}\hsp\de M\big\rbrace
\end{split}
\end{equation*}
is an open dense subset of $\:\left\lbrace g\in[\bar g]:\:(M,g)\hsp\text{has minimal boundary}\right\rbrace$.
\end{theorem}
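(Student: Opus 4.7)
The strategy is to exploit the fact that within the conformal class the minimal-boundary constraint forces the conformal factor to have vanishing normal derivative on $\de M$, after which $\norm{\sff}^2$ transforms on $\de M$ by simple pointwise multiplication. The statement then collapses to a one-parameter generic-Morse assertion on the closed manifold $\de M$, which can be handled by parametric transversality.

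Write $g=\phi^{4/(n-2)}\bar g$ with $\phi>0$ in $C^m(M)$. The mean curvature satisfies
\begin{equation*}
H_g \;=\; \phi^{-n/(n-2)}\Bigl(\tfrac{2}{n-2}\,\de_\nu\phi + H_{\bar g}\,\phi\Bigr),
\end{equation*}
so $H_{\bar g}=H_g=0$ imposes $\de_\nu\phi=0$ on $\de M$. The companion identity $h^g_{ij}=\phi^{2/(n-2)}\bigl(h^{\bar g}_{ij}+\tfrac{2}{n-2}\phi^{-1}(\de_\nu\phi)\,\bar g_{ij}\bigr)$ then reduces on $\de M$ to $h^g_{ij}=\phi^{2/(n-2)}h^{\bar g}_{ij}$, yielding
\begin{equation*}
\norm{\sff_g}_g^2 \;=\; \phi^{-4/(n-2)}\,\norm{\sff_{\bar g}}_{\bar g}^2 \qquad\text{on }\de M.
\end{equation*}
Setting $f:=\norm{\sff_{\bar g}}_{\bar g}^2>0$ and $u:=\phi^{-4/(n-2)}|_{\de M}$, the problem reduces to: for a generic positive $u\in C^m(\de M)$, the function $uf$ is Morse on $\de M$. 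Conversely, any positive $u\in C^m(\de M)$ arises this way, by extending $u^{-(n-2)/4}$ to a positive $C^m$ function on $M$ that is constant along the normal direction in a tubular neighborhood of $\de M$ (so that $\de_\nu\phi=0$ automatically). This extension is $C^m$-continuous, so openness and density transfer between $\mathcal B$ and its counterpart in the space of positive functions on $\de M$.

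Openness of $\mathcal B$ is then immediate from the $C^2$-openness of the Morse condition on the compact closed manifold $\de M$. For density I apply parametric transversality to
\begin{equation*}
\Phi\colon \{u\in C^m(\de M):u>0\}\times\de M \longrightarrow T^*\de M,\qquad \Phi(u,p)=d(uf)_p.
\end{equation*}
At any $(u,p)$ with $\Phi(u,p)=0$, the partial derivative in the $u$-direction sends $v\in C^m(\de M)$ to $v(p)\,df_p+f(p)\,dv_p$; since $f(p)>0$ and $v$ can be chosen with $v(p)=0$ and $dv_p$ arbitrary in $T^*_p\de M$, this partial is surjective onto the normal fiber of the zero section $Z\subset T^*\de M$. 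Hence $\Phi$ is transverse to $Z$, and the Sard--Smale theorem furnishes a residual (hence dense) set of $u$ for which $\Phi(u,\cdot)$ is transverse to $Z$, i.e.\ for which $uf$ is Morse. The only substantive step is the derivation of the conformal identities above; once the reduction $\norm{\sff_g}_g^2=\phi^{-4/(n-2)}\norm{\sff_{\bar g}}_{\bar g}^2$ on $\de M$ is in hand, the remaining transversality argument is routine and considerably simpler than its analogue for Theorem~\ref{theorem1} on the full space of metrics.
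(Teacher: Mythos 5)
Your proposal is correct and the underlying mechanism — exploit the conformal transformation law of $\sff$ on $\de M$, which under the minimal‐boundary constraint $\de_\nu\phi=0$ degenerates to pointwise multiplication, and then run a parametric transversality argument — is the same as the paper's. The paper works with the space $\Theta=\{f\in C^m(M):\de_\nu f=0\}$ and the map $G(p,u)=\nabla_x\norm{\sff_{e^{2u}\bar g}(p)}^2$, applies Henry's Banach‐manifold transversality theorem (which requires checking a $\sigma$‑properness hypothesis in addition to the Fredholm and surjectivity conditions), and patches a finite cover of $\de M$; the surjectivity check in the paper's Proposition~\ref{sur-G} amounts to exactly the observation you make, namely that $v\mapsto v(p_0)\,df_{p_0}+f(p_0)\,dv_{p_0}$ is onto because $f(p_0)=\norm{\sff_{\bar g}(p_0)}^2>0$ and the $1$‑jet of $v$ at $p_0$ is free.

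Where you deviate, usefully, is in pushing the reduction one step further: since only the boundary restriction of the conformal factor enters $\norm{\sff_g}^2$ on $\de M$, you parametrize directly by $u\in C^m(\de M)$, $u>0$, and reduce to the cleaner assertion that $uf$ is generically Morse on the closed manifold $\de M$. This buys you a simpler transversality argument (standard Sard–Smale on a compact closed manifold, no $\sigma$‑properness check, no need for the auxiliary Banach submanifold $\Theta\subset C^m(M)$), at the cost of having to verify that openness and density transfer through the restriction map $\phi\mapsto\phi|_{\de M}$ — which you handle correctly via a continuous linear extension operator with vanishing normal derivative. The paper keeps the parametrization by functions on $M$ so that the framework is uniform with the proof of Theorem~\ref{theorem1}, where no such boundary reduction is available. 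One minor caveat: you should still spell out the finite‑cover/Baire step (residual in each of finitely many chart‑level sets, hence residual, hence dense), and note that $m\ge 3$ guarantees the regularity needed for Sard–Smale; both are routine and you rightly flag them as such, but they are where the transition from ``transverse on a neighbourhood'' to ``dense in the global function space'' actually happens.
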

\bigskip

Our results are motivated by the study of compactness of solutions for linear perturbations of the following geometric problem: {\em given smooth functions $K$ and $H$, is there any conformal metric $g$ such that the scalar curvature of $M$ and the boundary mean curvature of $\de M$ with respect to $g$ are $K$ and $H$?}

Considering a conformal metric $g = u^\frac{4}{n-2}\bar g$, with $u$ a smooth and positive function, the problem turns out to be equivalent to finding a positive solution to the doubly critical boundary value problem:
\begin{equation}\label{prob-geo}
	\left\{\begin{array}{ll}-\frac{4(n-1)}{n-2}\Delta_{\bar g} u + S_{\bar g} u =K u^{\frac{n+2}{n-2}}&\mbox{in}\hsp M,\\[0.075in]
	\frac{2}{n-2}\frac{\partial u}{\partial \nu}+H_{\bar g} u = H u^{\frac{n}{n-2}}&\mbox{on}\hsp \partial M,\end{array}\right.
\end{equation}

where $\Delta_{\bar g}$ is the Laplace-Beltrami operator, $S_{\bar g}$ and $H_{\bar g}$ denote the scalar and boundary mean curvatures of $(M,\bar g)$ and $\nu$ is the outward unit normal vector to $\de M.$ By Escobar's result mentioned earlier, via a conformal change of the initial metric $\bar g$ we can start with $H_{\bar g}=0$ and $S_{\bar g}$ of constant sign. 

More precisely, we are concerned with the case of $K<0$ and $H$ of arbitrary sign, which to our knowledge has only been considered by Cruz-Blázquez,  Malchiodi and Ruiz in \cite{CMR}. In this work, the authors introduce the scaling-invariant function $\D_n:\de M\to \R$ given by $$\D_n(x)=\sqrt{n(n-1)}\frac{H(x)}{\sqrt{\abs{K(x)}}},$$and show via a trace inequality that the nature of the associated energy functional depends on whether $\max_{\de M}\D_n<1$ or not. When this condition holds the functional becomes coercive and a solution to \eqref{prob-geo} can be found in the form of a global minimizer on manifolds of non-positive \textit{Yamabe invariant} ($S_{\tilde g}\leq 0$). On the other hand, the functional is no longer bounded from below when $\D_n\geq 1$ at some point on $\de M$. Still, the existence of a positive solution on three dimensional manifolds is recovered using a min-max argument. 

The proof relies on a precise blow-up analysis. First, it is proved that \textit{bubbling of solutions} occurs around boundary points where $\D_n\geq 1$. At blow-up points with $\D_n(p)=1$ the tangential gradient of $\D_n$ must vanish, contradicting a regularity assumption on the set $\{\D_n=1\}$. To deal with the loss of compactness around singular points $p$ with $\D_n(p)>1$, where the limiting profiles are \textit{bubbles}, it is shown that in dimension three they are all \textit{isolated and simple} (see also \cite{DMA, Li-dom} in this regard), allowing for some integral estimates that prevent this behaviour when $S_{\bar g}\leq 0$.

This restriction on the dimension of $M$ is not technical. In fact, three is the maximal dimension for which one can prove that this type of blow-ups are isolated and simple for generic choices of $K$ and $H$. Some partial results are available in higher dimensions (see \cite{AA,BAGOB}), but a general description of the issue is still missing.

\medskip

Motivated by the previous observations, we study the following linear perturbation of \eqref{prob-geo}: 

\begin{equation}\label{prob-per}
	\left\{\begin{array}{ll}-\frac{4(n-1)}{n-2}\Delta_{\bar g} u + S_{\bar g} u =K u^{\frac{n+2}{n-2}}&\mbox{in}\hsp M,\\[0.075in]
		\frac{2}{n-2}\frac{\partial u}{\partial \nu}+\e u = H u^{\frac{n}{n-2}}&\mbox{on}\hsp \partial M,\end{array}\right.
\end{equation}
where $\e>0$ is a small parameter. We address the following questions:
\begin{enumerate}
	\item Do there exist solutions to \eqref{prob-per} that blow-up as $\e\to0$?
	\item If $n\geq 4$, do there exist solutions to \eqref{prob-per} with \textit{clustering} (i.e. non-isolated) and/or \textit{towering} (i.e. non-simple) blow-up points?
\end{enumerate}
In the forthcoming work \cite{CPV} Cruz-Blázquez, Pistoia and Vaira give a positive answer to both questions.

\begin{theorem*}{\cite[Theorem 1.1]{CPV}} Assume
	\begin{enumerate}
		\item $(M^n,\bar g)$ is a manifold of positive Yamabe invariant equipped with the metric given by \cite[Lemma 1.1]{Escobar92-2}, and $4\leq n\leq 7$.
		\item $K<0$ and $H>0$ are constant functions such that $\D_n>1$.
		\item $p\in \de M$ is a non-umbilical point (i.e. $\sff_{\bar g}(p)\neq 0$) and a non-degenerate minimum of $\norm{\sff_{\bar g}}^2$.
	\end{enumerate}
Then $p$ is a clustering blow-up point, that is, for every $k\geq 1$ there exists $\e_k>0$ such that for all $\e\in(0,\e_k)$ the problem \eqref{prob-per} has a solution $u_\e$ with $k$ positive peaks at $p_\e^j$ and $p_\e^j\to p$ as $\e\to 0$.
\end{theorem*}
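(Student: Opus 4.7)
The plan is a Lyapunov--Schmidt finite-dimensional reduction around a clustering $k$-bubble ansatz. Working in Fermi coordinates at $p$ that flatten a neighbourhood of $\de M$ to $\de \R^n_+$, the problem \eqref{prob-per} is asymptotically governed by the half-space model
\begin{equation*}
-\tfrac{4(n-1)}{n-2}\Delta U = K\, U^{\frac{n+2}{n-2}}\ \text{in }\R^n_+,\qquad \tfrac{2}{n-2}\tfrac{\de U}{\de \nu} = H\, U^{\frac{n}{n-2}}\ \text{on }\de \R^n_+.
\end{equation*}
Since $K,H$ are constant with $\D_n>1$, this model admits an $n$-parameter family of positive finite-energy solutions $U_{\delta,y}(x)=\delta^{-(n-2)/2}U_0\bigl((x-y)/\delta\bigr)$, indexed by a concentration scale $\delta>0$ and a boundary translation $y\in \de \R^n_+$. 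One transplants these bubbles to $M$ via Fermi coordinates and projects them so as to compensate for $S_{\bar g}$ and for the $\e$-potential on $\de M$, obtaining $PU_{\delta,q}$ for $q\in \de M$ near $p$.

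Given $k\geq 1$, introduce the ansatz $W_\e=\sum_{j=1}^k PU_{\delta_j,p_j}$ with $(\delta_j,p_j)$ free parameters, and look for a genuine solution $u_\e=W_\e+\phi_\e$ of \eqref{prob-per} with $\phi_\e$ constrained to be orthogonal to the $(n+1)k$-dimensional approximate kernel spanned by the derivatives $\de_{\delta_j} PU_{\delta_j,p_j}$ and the boundary translations $\de_{y^a} PU_{\delta_j,p_j}$. The infinite-dimensional equation for $\phi_\e$ is solved by a contraction-mapping argument combining a uniform coercivity estimate for the linearised operator on the orthogonal complement with a sharp bound on the error of the ansatz; this yields an explicit control of $\|\phi_\e\|$ in terms of $\e$, the $\delta_j$'s and the mutual distances $|p_i-p_j|$.

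Existence of a true solution then reduces to finding critical points of the functional $F_\e(p_j,\delta_j)=J_\e(W_\e+\phi_\e)$, where $J_\e$ is the energy associated with \eqref{prob-per}. The core step is the asymptotic expansion
\begin{equation*}
F_\e(p_j,\delta_j)=k\,c_\infty+\sum_{j=1}^k \Psi_\e\bigl(\delta_j,\norm{\sff_{\bar g}(p_j)}^2\bigr)+\sum_{i\neq j}\Phi\bigl(\delta_i,\delta_j,|p_i-p_j|\bigr)+\text{lower order},
\end{equation*}
where $\Psi_\e(\delta,t)$ is an elementary function balancing a linear-in-$\delta$ term coming from the $\e$-perturbation against a quadratic-in-$\delta$ term produced, via the Taylor expansion of $\bar g$ in Fermi coordinates, by the second fundamental form (whence the factor $\norm{\sff_{\bar g}}^2$), and $\Phi$ is a strictly repulsive two-body interaction of order $(\sqrt{\delta_i\delta_j}/|p_i-p_j|)^{n-2}$. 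In the dimensional window $4\leq n\leq 7$ these three contributions balance on a common scale of the free parameters; in the corresponding rescaled variables, $F_\e$ is $C^1$-close to a limit functional whose position-dependence is, to leading order, a multiple of $\sum_j \norm{\sff_{\bar g}(p_j)}^2$ plus the repulsive interaction. The non-degenerate minimum hypothesis on $\norm{\sff_{\bar g}}^2$ at $p$ then confines every minimising configuration to a neighbourhood of $p$, and a direct minimisation over the set of pairwise-distinct configurations --- with $\Phi$ providing the barrier that keeps the centres apart --- produces, for every $k\geq 1$, a critical point of $F_\e$, and hence a clustering solution $u_\e$ of \eqref{prob-per} with $k$ peaks $p_\e^j\to p$.

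The principal obstacle is the sharp expansion of $F_\e$ at clustering configurations: one must separate cleanly the three effects listed above and control the remainders uniformly in the $k$-fold degeneration, including the delicate interplay between bubble self-interaction, the geometric corrections in Fermi coordinates, and the linear perturbation. This requires careful Fermi-coordinate asymptotics on $(M,\bar g)$, where hypothesis~(1) and the normalisation coming from \cite{Escobar92-2} play an essential role, and it is precisely at this stage that the non-degenerate minimum of $\norm{\sff_{\bar g}}^2$ provided by Theorem~\ref{theorem2} enters as the condition that makes the reduced finite-dimensional problem solvable by a stable minimisation scheme.
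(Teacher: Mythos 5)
The statement you were asked to prove is not proved in this paper: it is quoted verbatim from the forthcoming reference \cite{CPV} (Cruz-Bl\'azquez, Pistoia, Vaira, ``Clustering phenomena in low dimensions for a boundary Yamabe problem'') and appears here only as motivation for the paper's own results, Theorems \ref{theorem1} and \ref{theorem2}, which establish that hypothesis~(3) of the quoted theorem holds for a generic (conformal) metric. There is therefore no proof in this paper against which your proposal can be checked line by line.

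With that caveat, your Lyapunov--Schmidt sketch is the natural framework for a result of this type and is consistent with what one expects in \cite{CPV}: a $k$-bubble ansatz built from the half-space model with $K,H$ constant and $\D_n>1$, transplanted by Fermi coordinates; a contraction-mapping step on the orthogonal complement of the approximate kernel; and a reduced energy whose self-energy term brings in $\norm{\sff_{\bar g}}^2$ via the Taylor expansion of $\bar g$ near $\de M$. Two points are too vague to count as a proof, however. First, the sign analysis: you assert that minimisation of the reduced functional works and that the position-dependence is ``to leading order a multiple of $\sum_j \norm{\sff_{\bar g}(p_j)}^2$,'' but whether a non-degenerate \emph{minimum} (rather than maximum or saddle) of $\norm{\sff_{\bar g}}^2$ is the correct stable configuration hinges on the relative signs produced by $K<0$, $H>0$, the $\e$-term, and the second-fundamental-form correction, none of which you track. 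Second, your explanation of the dimensional window $4\leq n\leq 7$ as ``these three contributions balance on a common scale'' does not engage with the actual obstructions: $n=3$ is excluded because there blow-ups for this problem are isolated and simple (cf.\ \cite{CMR}), so clustering cannot occur, while the upper bound reflects that for larger $n$ higher-order curvature corrections in the Fermi expansion (and the error of the ansatz) are no longer subordinate to the $\delta^2\norm{\sff_{\bar g}}^2$ term driving the reduced problem. A complete proof along your lines would have to make both steps quantitative.
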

Notice that Theorem \ref{theorem1} implies that condition (3) holds for a generic choice of a Riemannian metric $\bar g$ on $M$ with $H_{\bar g}=0$. Moreover, this metric can be chosen in the conformal class of that given by Escobar's result by Theorem \ref{theorem2}, and if it is taken close enough to it, the condition $S_{\bar g}>0$ is preserved by continuity.

\medskip 

These results can also be applied to prove the existence of positive solutions to \eqref{prob-per} that blow-up at a single boundary point $p$ which is non-umbilic and a non-degenerate minimum of $\norm{\sff_{\bar g}}^2$ for every dimension $n\geq 4.$ This has been studied by Ghimenti, Micheletti and Pistoia in \cite{GMP1, GMP2} for the scalar-flat case $(K=0)$ in presence of a linear non-autonomus perturbation $\e\gamma u$, for some $\gamma \in C^2(M)$.

\medskip 

The rest of the paper is organized as follows: in section \ref{setting} we describe the setting of the problem. As we will see, the proof of these results relies on a  classical transversality theorem  (see Theorem \ref{TT}) and   is carried out in Section \ref{section-proof}. The proof of the \textit{transversality condition} (see  {\em (T-2)}) is technical and is postponed to Section \ref{section-tech}.
 
\section{Notation and Preliminaries}\label{setting}
In what follows we often identify a point in the manifold with its Fermi coordinates $(x_1,\ldots,x_{n-1},t)$, which we now recall.

Take $\bar p\in \de M$, and let $(x_1,\ldots,x_{n-1})$ be normal coordinates on the $(n-1)-$dimensional manifold $\de M$, valid in a geodesic ball $B^{\bar g}_R(\bar p)\subset \de M$, with $R>0$ small enough. Let $\gamma(t)$ be the geodesic leaving from $\bar p$ in the orthogonal direction to $\de M$ and parametrized by arc length. Then $(x_1,\ldots,x_{n-1},t)$ are the so-called Fermi coordinates at $\bar p$, where $(x_1,\ldots,x_{n-1})\in B_R(0)$ and $t\in [0,T]$, for some $T>0$ small. 

We adopt the following notation for derivatives:
\begin{align*}
	\de_s = \frac{\de}{\de x_s} \hsp\text{ for}\hsp s=1,\ldots,n-1,\hsp \text{and}\hsp \de_t = \frac{\de}{\de t}
\end{align*}
For convenience, we list the properties of the Fermi coordinates that we will use in our proofs. These and other properties of the Fermi coordinates can be found in \cite[\textsection 5]{lee-book}.

\begin{proposition}{\bf Properties of the Fermi Coordinates.}\label{fermi} Let $(x_1,\ldots,x_{n-1},t)$ be the Fermi coordinates on $B_R(0)\times[0,T]$ with respect to the metric $\bar g$ centered at $\bar p$. Then, $\bar g(x,0)$ satisfies:
\begin{enumerate}
	\item[(i.)] $\bar g_{ij}(0,0)=\delta_{ij}$ for every $1\leq i,j\leq n-1$.
	\item[(ii.)] $\de_s \bar g_{ij}(0,0)=0$ for every $1\leq i,j\leq n-1$.
	\item[(iii.)] $\bar g_{in}(x,0)=0$ for every $i=1,\ldots,n-1$ and $x\in B_R(0)$.
	\item[(iv.)] $\bar g_{nn}(x,0)=1$ for every $x\in B_R(0)$.
\end{enumerate} 
\end{proposition}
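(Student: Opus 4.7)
The plan is to derive each of the four identities directly from the definition of the Fermi chart, splitting the argument according to whether the property is transverse to $\partial M$ (items (iii)--(iv)) or tangential to it (items (i)--(ii)). I would begin by unfolding the construction: the chart is $\Phi(x_1,\ldots,x_{n-1},t)=\exp^{\bar g}_{q(x)}\bigl(t\,\nu(q(x))\bigr)$, where $q(x)=\exp^{\partial M}_{\bar p}(x_1,\ldots,x_{n-1})$ is written through Riemannian normal coordinates on $(\partial M,\bar g|_{\partial M})$ centered at $\bar p$, and $\nu$ denotes the inward unit normal to $\partial M$.

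To obtain (iii) and (iv), I would fix $x\in B_R(0)$ and use that, by construction, the curve $t\mapsto\Phi(x,t)$ is a unit-speed geodesic of $(M,\bar g)$ leaving $q(x)$ in the normal direction $\nu(q(x))$. This forces $\partial_t|_{(x,0)}$ to be a unit vector normal to $\partial M$ at $q(x)$, giving $\bar g_{nn}(x,0)=|\partial_t|^2=1$ at once. Moreover, the restricted map $\Phi(\cdot,0)$ lands inside $\partial M$, so each $\partial_i|_{(x,0)}$ with $i<n$ is tangent to $\partial M$ at $q(x)$ and hence orthogonal to $\partial_t|_{(x,0)}$, producing $\bar g_{in}(x,0)=0$.

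For (i) and (ii), I would remark that, by the previous step, $\Phi(\cdot,0)$ is exactly the Riemannian normal coordinate chart on $(\partial M,\bar g|_{\partial M})$ centered at $\bar p$. Consequently the submatrix $\bigl(\bar g_{ij}(x,0)\bigr)_{1\le i,j\le n-1}$ represents the induced metric on $\partial M$ in those normal coordinates, and the classical identities satisfied by any metric in normal coordinates at the center directly yield both $\bar g_{ij}(0,0)=\delta_{ij}$ and $\partial_s\bar g_{ij}(0,0)=0$. The only subtlety I anticipate is bookkeeping: one must check that the tangential derivative in (ii) acts only on the boundary coordinates, so that it genuinely sees the intrinsic metric on $\partial M$ and not a mixed object involving normal components. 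This verification, together with the standard expansion of a metric in normal coordinates, can be quoted from \cite[\S 5]{lee-book} and closes the argument without further computation.
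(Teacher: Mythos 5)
Your proof is correct, and since the paper itself gives no argument for Proposition \ref{fermi} (it simply cites \cite[\S 5]{lee-book}), you have in effect supplied the standard derivation the authors are implicitly appealing to. The splitting into a normal part (items (iii)--(iv), via the Gauss lemma / unit-speed geodesic observation) and a tangential part (items (i)--(ii), via the fact that $\Phi(\cdot,0)$ is precisely the normal chart on $\partial M$) is the canonical route; the only minor point worth flagging is the sign convention on $\nu$, which you took to be inward whereas the paper's $\nu$ in \eqref{prob-geo} is outward --- this is harmless here since only $|\partial_t|^2$ and orthogonality relations enter, but it is worth being consistent if this proof were inserted verbatim.
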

For $t\in[0,T]$, by $h_{ij}(x,t)$ we understand the second fundamental form of the submanifold
$$\de M_t = \{(x,t):\:x\in \de M\}.$$
A result by Escobar from \cite{Escobar92} gives the following relation:
\begin{equation}\label{esc} 
	h_{ij}(x,t)=-\frac12\de_t \bar g_{ij}(x,t).
\end{equation} 
We denote by $\mathscr{S}^m$ the Banach space of $C^m$ symmetric covariant 2-tensors on $M$ equipped with the norm $\norm{\cdot}_m$, which is defined as follows: fix $\{(V_\lambda):\:\lambda\in\Lambda\}$ a finite covering of $M$ such that the closure of every $V_\lambda$ is contained in an open coordinate neighborhood $(U_\lambda,\psi_\lambda)$, and such that if $V_\lambda\cap\de M\neq \emptyset$, then $\psi_\lambda = (x_1,\ldots,x_{n-1},x_n)$ are Fermi Coordinates. Then, if $k\in\mathscr{S}^m$ and $k_{ij}$ are its components with respect to the coordinates on $V_\lambda$, we define
\begin{equation*}
\norm{k}_m=\sum_{\lambda\in \Lambda}\sum_{\abs{\alpha}\leq m}\sum_{i,j=1}^n\sup_{\phi_\lambda(V_\lambda)}\frac{\de^\alpha k_{ij}}{\de x_1^{\alpha_1}\cdots\de x_{n}^{\alpha_n}}
\end{equation*}
We will denote by $\mathscr{B}_\rho(k_0)$ the ball of radius $\rho>0$ and center $k_0$ in $\mathscr{S}^m$ with respect to the norm $\norm{\cdot}_m$.

\medskip 

In our case, due to the restriction to metrics with zero boundary mean curvature, we need to apply a transversality theorem for general Banach manifolds. The following result is stated and proved in Henry's book \cite[Theorem 5.4]{Henry-book}.

\begin{theorem}{\bf Transversality Theorem}\label{TT} Let $m$ be a positive integer, $X,Y,Z$ Banach manifolds of class $C^m$, $A\subset X\times Y$ an open set, $F:A\to Z$ a $C^m$ map and $\xi\in Z$. Assume that for each $(x,y)\in F^{-1}(\{0\})$:
	\begin{enumerate}
		\item[(T-1)] $D_xF(x,y):T_xX\to T_\xi Z$ is semi-Fredholm with index $<m$,
		\item[(T-2)] $DF(x,y):T_xX\times T_yY\to T_\xi Z$ is surjective, and
		\item[(T-3)] The map $\pi\circ i:F^{-1}(\{0\})\to Y$ defined by $\pi\circ i(x,y)=y$ is $\sigma-$proper, that is, there exists a countable collection $\{M_j:j\in\N\}$ such that $F^{-1}(\{0\})=\cup_{j=1}^{+\infty}M_j$ and $\pi\circ i\vert_{M_j}:M_j\to Y$ is proper for every $j$. 
	\end{enumerate}
	Then the set
	\begin{equation*}
		\Omega = \left\lbrace y\in Y:\: \xi \hsp\text{is a critical value of } f(\cdot,y)\right\rbrace
	\end{equation*}
	is a meager set in $Y$ and, if $\pi\circ i$ is proper, $\Omega$ is also closed.
\end{theorem}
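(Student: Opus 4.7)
The plan is to deduce this statement from the classical Sard--Smale theorem applied to the natural projection from the zero set of $F$ onto the parameter manifold $Y$. First I fix $(x_0,y_0)\in F^{-1}(\{\xi\})$ and use the surjectivity hypothesis \textit{(T-2)} together with the semi-Fredholm condition \textit{(T-1)} to invoke the implicit function theorem in Banach spaces. The surjectivity of the full differential $DF(x_0,y_0)$, combined with the splitting of $\ker D_xF(x_0,y_0)$ guaranteed by the semi-Fredholmness of $D_xF$ (closed range of finite codimension or finite dimensional kernel, hence a topological complement), realises $F^{-1}(\{\xi\})$ locally as a $C^m$ Banach submanifold of $A$.

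Once $F^{-1}(\{\xi\})$ carries a manifold structure, I would compute the differential of the projection $\pi\circ i:F^{-1}(\{\xi\})\to Y$. A direct linear algebra argument on the tangent space $\{(\dot x,\dot y):D_xF\dot x+D_yF\dot y=0\}$ shows that its kernel at $(x,y)$ is canonically isomorphic to $\ker D_xF(x,y)$, while its cokernel identifies with $\mathrm{coker}\,D_xF(x,y)$. Consequently $\pi\circ i$ is a semi-Fredholm map of index equal to $\ind D_xF(x,y)<m$, and a point $(x,y)$ is critical for $\pi\circ i$ if and only if $D_xF(x,y)$ fails to be surjective, that is, if and only if $\xi$ is a critical value of $F(\cdot,y)$.

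The conclusion then follows by applying the Sard--Smale theorem to $\pi\circ i$ on each set $M_j$ supplied by the $\sigma$-properness hypothesis \textit{(T-3)}. Properness of $\pi\circ i|_{M_j}$ guarantees that the image of its critical set is closed in $Y$, while Sard--Smale for $C^m$ semi-Fredholm maps of index strictly less than $m$ ensures that this image has empty interior. Hence $\Omega$ is contained in a countable union of closed nowhere dense sets and is therefore meager by the Baire category theorem; when $\pi\circ i$ is globally proper one simply takes $M_1=F^{-1}(\{\xi\})$ and concludes that $\Omega$ is itself closed.

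The main obstacle I would expect is the careful book-keeping in the semi-Fredholm setting: one must verify that the implicit function theorem really applies when only one side of the splitting has a topological complement, and that the index computation for $\pi\circ i$ transports consistently from its local models to the (possibly disconnected) global manifold. These technical points, together with the proof of Sard--Smale itself for maps whose index is strictly below the regularity class, are the genuine content of the argument; once they are in hand the remaining deductions are routine, as detailed in Henry's monograph \cite{Henry-book}.
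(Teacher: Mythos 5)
The paper does not prove Theorem~\ref{TT}: it is quoted from \cite[Theorem 5.4]{Henry-book} and used as a black box in Section~\ref{section-proof}, so there is no in-paper argument to compare your sketch against. That said, your outline is the canonical Sard--Smale route that underlies Henry's proof, and the central linear-algebra step is correct: on the tangent space $\{(\dot x,\dot y): D_xF\,\dot x + D_yF\,\dot y = 0\}$ one indeed has $\ker D(\pi\circ i)\cong\ker D_xF$, and surjectivity of $DF$ induces an isomorphism $T_yY/\operatorname{im}D(\pi\circ i)\cong Z/\operatorname{im}D_xF$, so $\pi\circ i$ inherits the semi-Fredholm index of $D_xF$ and its critical points are exactly the $(x,y)$ at which $D_xF(x,y)$ fails to be onto; the decomposition over the $M_j$ with properness then yields closed nowhere-dense images whose union is meager, and global properness gives closedness of $\Omega$.

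However, you explicitly defer precisely the three points that carry the real weight of the theorem: (i) that the implicit function theorem applies to give $F^{-1}(\{\xi\})$ a $C^m$ manifold structure when $D_xF$ is merely semi-Fredholm, which requires producing a topological complement for $\ker DF$ inside $T_xX\times T_yY$ (here one must use that a finite-dimensional or finite-codimensional closed subspace splits, together with (T-2)); (ii) the Sard--Smale conclusion for $C^m$ maps whose index can be $-\infty$ (infinite cokernel), where the classical Smale statement for Fredholm maps does not apply verbatim and one needs Henry's quantitative refinement using the bound $\operatorname{ind}<m$; and (iii) that the image of the critical set of $\pi\circ i|_{M_j}$ is closed, which uses properness together with the fact that the critical set itself is closed in $M_j$ (a point you state but do not justify). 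As a road map your proposal is accurate and consistent with Henry's argument, but as written it is not a self-contained proof — the burden is shifted onto exactly the citations it invokes.
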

Now, we introduce the maps to which we will apply the Transversality Theorem. Remember that $(M,\bar g)$ is a compact Riemannian manifold with boundary and $\bar g$ is such that $H_{\bar g}=0$ and $\sff_{\bar g}(p)\neq 0$ for every $p\in \de M$. First, we define
\begin{equation}
N = \left\lbrace k\in\mathscr{E}^m:\:H_{\bar g+k}=0 \right\rbrace
\end{equation}
\begin{lemma} $N$ is a submanifold of $\mathscr{E}^m$.
\end{lemma}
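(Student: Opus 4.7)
The plan is to realize $N$ as the preimage of a regular value under a smooth submersion and then invoke the implicit function theorem for Banach manifolds. Introduce
\[
\mathcal{H}\colon \mathscr{E}^m \longrightarrow C^{m-1}(\de M),\qquad \mathcal{H}(k)=H_{\bar g+k},
\]
and observe that $H_g$ is a rational expression in $g$ and its first derivatives, so $\mathcal{H}$ is of class $C^m$. Since $N=\mathcal{H}^{-1}(0)$, it suffices to show that for every $k\in N$ the differential $D\mathcal{H}(k)\colon \mathscr{S}^m\to C^{m-1}(\de M)$ is surjective and admits a bounded linear right inverse (so that its kernel is automatically complemented).

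To derive a workable expression for $D\mathcal{H}(k)$, I would fix $k\in N$, set $g=\bar g+k$, and work near a boundary point $\bar p$ in Fermi coordinates $(x,t)$ of $g$ centered at $\bar p$. Using $h_{ij}=-\frac12\de_t g_{ij}$ and the properties in Proposition \ref{fermi}, and tracking the smooth dependence of the Fermi frame and of the outward unit normal on the metric, a direct calculation gives
\[
D\mathcal{H}(k)[\ell](\bar p)=-\frac{1}{2(n-1)}\sum_{i=1}^{n-1}\de_t \ell_{ii}(0,0)+L_{\bar p}\bigl(\ell|_{\de M}\bigr),
\]
where $L_{\bar p}$ is a first-order tangential differential operator acting on $\ell|_{\de M}$. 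The crucial feature is that the coefficient of the normal derivative $\de_t \ell$ is a non-degenerate trace (essentially the trace with respect to the induced boundary metric), so pointwise values of $D\mathcal{H}(k)[\ell]$ can be prescribed by adjusting $\de_t \ell$ at $\de M$ while keeping $\ell|_{\de M}\equiv 0$.

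Building on this, given $\varphi\in C^{m-1}(\de M)$ I would construct the right inverse explicitly. Let $\tilde\varphi$ be the extension of $\varphi$ to a collar of $\de M$ obtained by pulling $\varphi$ back via the normal projection onto $\de M$, let $d_g$ be the $g$-distance to $\de M$, and let $\chi$ be a smooth cut-off compactly supported in the collar with $\chi(0)=1$. Define
\[
R(\varphi)=-2\,\tilde\varphi\,d_g\,\chi(d_g)\,g.
\]
Then $R(\varphi)|_{\de M}\equiv 0$, so $L_{\bar p}$ contributes nothing, and in Fermi coordinates $\de_t R(\varphi)_{ij}(x,0)=-2\varphi(x)g_{ij}(x,0)$; using $g^{ij}g_{ij}=n-1$ on $T\de M$ yields $D\mathcal{H}(k)[R(\varphi)]=\varphi$. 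The map $R\colon C^{m-1}(\de M)\to\mathscr{S}^m$ is linear and continuous, hence a bounded right inverse of $D\mathcal{H}(k)$; this produces both the surjectivity and the complementary closed subspace $R\bigl(C^{m-1}(\de M)\bigr)$ of $\ker D\mathcal{H}(k)$. The implicit function theorem then identifies $N$ as a $C^m$ Banach submanifold of $\mathscr{E}^m$.

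The technically delicate step is the derivation of the formula for $D\mathcal{H}(k)[\ell]$, since the Fermi chart, the outward unit normal and the inverse metric all depend on the perturbation $\ell$; however, all these contributions give only operators acting tangentially on $\ell|_{\de M}$, hence they are harmless for the surjectivity argument, which uses test directions vanishing on $\de M$.
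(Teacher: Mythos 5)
Your proof takes essentially the same route as the paper's: realize $N$ as the zero set of the mean-curvature map $k\mapsto H_{\bar g+k}$ and show this map is a submersion at every zero. Both arguments rest on the same structural fact about the linearized mean curvature — the coefficient multiplying the normal derivative $\de_t\ell_{ij}$ (tangential trace) is non-degenerate, while all remaining contributions are tangential operators acting on $\ell|_{\de M}$; indeed, after converting conventions, your displayed formula for $D\mathcal{H}(k)[\ell]$ agrees with the paper's \eqref{cond}.

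The difference is in the construction of the test direction. The paper sets all components of $k$ to zero except $k_{11}=\bar k$ and reduces surjectivity to the scalar first-order ODE \eqref{surj-2} in $t$, which it solves explicitly. You instead take $R(\varphi)=-2\tilde\varphi\,d_g\,\chi(d_g)\,g$, a conformal-type perturbation that vanishes on $\de M$ and whose normal derivative at the boundary is $-2\varphi\,g$; this kills every tangential contribution automatically and hits the trace term cleanly. Your version is more economical, and it has the additional virtue of producing an explicit continuous linear right inverse, so the split-kernel hypothesis of the Banach-space implicit function theorem is verified rather than left implicit, whereas the paper only states that it suffices to check $0$ is a regular value.

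One shared subtlety: with the natural target $C^{m-1}(\de M)$ (since $H_{\bar g+k}$ consumes one derivative of $k$), the normal-projection extension $\tilde\varphi$ of a $C^{m-1}$ function is itself only $C^{m-1}$, so $R(\varphi)$ lands in $C^{m-1}$ tensors, not in $\mathscr{S}^m$. To make $R$ a genuine right inverse into $\mathscr{S}^m$ one needs a regularizing extension (Whitney-type, for instance) that raises the interior regularity by one order while prescribing the first normal jet. The paper's ODE solution $\bar k$ has exactly the same regularity loss in the tangential variables, so this is not a defect peculiar to your argument, but it is worth flagging if you want the submersion theorem to apply in the stated function spaces.
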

\begin{proof}
Clearly, $N=\tilde H^{-1}(\{0\})$, where $\tilde H:\mathscr{S}^m(M) \to C(\de M, \R)$ is the map given by $\tilde H(k)=H_{\tilde g+k}$. To show that $N$ is a submanifold, it is enough with proving that $0$ is a regular value of $\tilde H$.

\medskip 
Take $k_0\in \mathscr{S}^m$ such that $\tilde H(k_0)=H_{\tilde g+k_0}=0.$ The derivative of $\tilde H$ at the point $k_0$ is the map
\begin{equation*}
	\begin{split}
		D\tilde H(k_0):\mathscr{S}^m(M) &\to C(\de M,\R) \\
		k&\to D\tilde H(k_0)[k]=DH_{\bar g+k_0}[k]
	\end{split}
\end{equation*}
We will show that $D\tilde H(k_0)$ is surjective. For a small enough $\e>0$ and $\tau\in(0,\e)$, 
\begin{align*}
	D\tilde H(k_0)[k]=\left.\frac{d}{d\tau}\right\vert_{\tau=0}\tilde H(k_0+\tau k)=\left.\frac{d}{d\tau}\right\vert_{\tau=0} (\bar g+k_0+\tau k)^{ij}(\ff_{\bar g+k_0+\tau k})_{ij}.
\end{align*}
If $\e$ is small enough, the following expansion holds:
\begin{equation}\label{expansion}
	(\bar g+k_0+\tau k)^{ij}=(g_0+\tau k)^{ij}=g_0^{ij}-\tau(g_0^{-1}kg_0^{-1})_{ij}+\sum_{\lambda=2}^{+\infty}\tau^\lambda(-1)^\lambda\left((g_0^{-1}k)^\lambda g_0^{-1}\right)_{ij},
\end{equation}
where we are calling $g_0=\bar g+k_0$. Denote by $h^0_{ij}$ the coefficients of the second fundamental form of $g_0$. Then, using \eqref{esc},
\begin{equation}\label{cond}
	\begin{split}
		D\tilde H(k_0)[k]&= -(g_0^{-1}kg_0^{-1})_{ij}h^0_{ij}+g_0^{ij}\left.\frac{d(\ff_{g_0+\tau k})_{ij}}{d\tau}\right\vert_{\tau=0}
\\ &=-g_0^{i\ell}g_0^{kj}k_{\ell k}h^0_{ij}-\frac{1}{2}g_0^{ij}\left.\frac{dk_{ij}}{dt}\right\vert_{t=0} \\
	\end{split}
\end{equation}
Therefore, we are led to show that, for every $\mathfrak h\in C(\de M,\R)$, there exists $k\in\mathscr{S}^m$ such that
\begin{equation}\label{surj-1}
-g_0^{i\ell}g_0^{kj}k_{\ell k}h^0_{ij} -\frac{1}{2}g_0^{ij}\left.\frac{dk_{ij}}{dt}\right\vert_{t=0} = \mathfrak h
\end{equation}
Take $k$ such that $k_{ij}=0$ if $(i,j)\neq (1,1)$, and $k_{11}=\bar k$. Then \eqref{surj-1} reduces to
\begin{equation}\label{surj-2}
\left.\frac{d\bar k(x,t)}{dt}\right\vert_{t=0}+ f(x,t)\bar k(x,t)=\frac{-\mathfrak h(x)}{g_0^{11}(x,t)},
\end{equation}
where 
\begin{equation}\label{f}
f(x,t)=\frac{2g_0^{i1}(x,t)g_0^{j1}(x,t)h^0_{ij}(x,t)}{g_0^{11}(x,t)}.
\end{equation}
Solutions of \eqref{surj-2} are explicit and given by
\begin{equation*}
\bar k(x,t)= \exp\left(-\int_0^tf(x,\tau)d\tau\right)\left(c(x)-\int_0^t\frac{\mathfrak h(x)\exp\left(\int_0^{\tau'}f(x,\tau)d\tau\right)}{g_0^{11}(x,\tau')}d\tau'\right),
\end{equation*}
where $c(x)$ is any continuous function.
\end{proof}
We introduce the $C^1$ map related to the proof of Theorem \ref{theorem1}: let $X=\de M$ and fix $\bar p\in \de M$ such that $\sff(\bar p)\neq 0$,  $Y=N$ and $A=\mathscr{B}_\rho(0)\cap N\times B_R(\bar p)$ for $\rho,R>0$ small enough. Define 
\begin{equation}\label{mappa}
	\begin{split}
		F:A &\to \R^{n-1} \\
		F(p,k)&=\nabla_x\norm{\ff_{\bar g+k}(p)}^2.
	\end{split}
\end{equation}
\begin{lemma}\label{open-A} The set
\begin{equation*}
\begin{split}
\mathcal A = \big\lbrace g\in N:\:\text{all the critical points of}\hsp\norm{\sff_g}^2\hsp{on}\hsp\de M\hsp\text{are non-degenerate}\big\rbrace	
\end{split}
\end{equation*}
is an open set of $N$.
\end{lemma}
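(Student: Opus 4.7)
The plan is to combine the implicit function theorem, applied at each critical point of $\norm{\sff_{g_0}}^2$, with a compactness argument on $\de M$ that excludes the appearance of new critical points under small perturbations of the metric.

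Fix $g_0 = \bar g + k_0 \in \mathcal{A}$. Since $\de M$ is compact and every non-degenerate critical point of a $C^2$ function is isolated, $\norm{\sff_{g_0}}^2$ has only finitely many critical points, say $p_1, \ldots, p_\ell$. Near each $p_i$ I would work in Fermi coordinates and consider the map $F$ defined in \eqref{mappa}, namely $F(p,k) = \nabla_x \norm{\sff_{\bar g+k}(p)}^2$. The partial derivative $D_p F(p_i, k_0)$ coincides with the Hessian of $\norm{\sff_{g_0}}^2$ at $p_i$, which is non-singular by hypothesis. The implicit function theorem then yields open neighborhoods $U_i$ of $p_i$ in $\de M$ and $V_i$ of $k_0$ in $N$, together with a continuous map $k \in V_i \mapsto p_i(k) \in U_i$, such that $p_i(k)$ is the unique critical point of $\norm{\sff_{\bar g+k}}^2$ in $U_i$, with $p_i(k_0) = p_i$. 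Since non-degeneracy of the Hessian is an open condition and $D_p F$ depends continuously on both arguments, after shrinking $V_i$ one may assume that the Hessian at $p_i(k)$ remains non-singular for every $k \in V_i$.

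The remaining task is to rule out critical points of $\norm{\sff_{\bar g + k}}^2$ lying outside $\bigcup_{i=1}^\ell U_i$. Set $K = \de M \setminus \bigcup_{i=1}^\ell U_i$; this is a compact set containing no critical point of $\norm{\sff_{g_0}}^2$, so there exists $c > 0$ with $\abs{F(p, k_0)} \geq c$ for every $p \in K$. By continuity of $F$ in its second argument together with the compactness of $K$, there is a neighborhood $V_0$ of $k_0$ in $N$ on which $\abs{F(p,k)} \geq c/2$ uniformly in $p \in K$. The open set $V = V_0 \cap \bigcap_{i=1}^\ell V_i$ is then a neighborhood of $k_0$ in $N$ contained in $\mathcal{A}$, which proves openness.

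I do not foresee any serious obstacle: once one checks that $F$ and $D_p F$ depend continuously on the metric, which follows from the coordinate expression of $\sff$ in terms of $\bar g + k$ and its first $t$-derivative via \eqref{esc}, the argument is a routine IFT-plus-compactness exercise. The only book-keeping point is that, since $g$ is constrained to lie in the submanifold $N$, perturbations must be taken tangentially to $N$; but as the implicit function theorem is being applied in the $p$-variable only, this plays no role, and continuity in the parameter $k$ restricted to $N$ is inherited from the ambient topology on $\mathscr{S}^m$.
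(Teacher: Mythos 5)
Your proof is correct and follows essentially the same route as the paper: finitely many critical points by compactness and non-degeneracy, the implicit function theorem applied to $F$ near each one, and a compactness argument on the complement to exclude new critical points (you phrase this quantitatively via a uniform lower bound on $\abs{F}$, the paper via a sequential contradiction, but these are equivalent). If anything you are slightly more careful than the paper in explicitly noting that, after shrinking the neighborhoods $V_i$, the Hessian at the perturbed critical point $p_i(k)$ remains non-singular — a step the paper's proof leaves implicit.
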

\begin{proof}
Fix $\hat g\in \mathcal A$. By the compactness of $\de M$, the critical points of $p\mapsto \norm{\sff_{\hat g}(p)}^2$ form a finite collection $\{\xi_1,\ldots,\xi_m\}$. 

\medskip

Around any $i=1,\ldots,m$ we can take Fermi coordinates, and consider the function $F$ defined in \eqref{mappa}. The Implicit Function Theorem gives us $\rho_i,R_i>0$ small enough and a function $\chi_i$ such that in $B_{R_i}(0)\times\mathscr{B}_{\rho_i}(0)$ the level set $\{F=0\}$ can be seen as the graph of the function $\chi_i(k)$. Define
$$B=\bigcup_{i=1}^m\:\{p\in\de M:\:d(p,\xi_i)<R_i\}$$
We claim that the metric $\hat g+k$ is such that $\norm{\sff_{\hat g+g}}^2$ has no critical points in \linebreak $\de M\setminus B$ for any $g\in\mathscr{B}_\rho(0)$ provided $\rho$ is small enough.

\medskip

Suppose by contradiction that there exists $\rho_k\to 0^+$, $g_k\in \mathscr{B}_{\rho_k}(0)$ and $p_k\in \de M\setminus B$ such that $F(p_k,g_k)=0.$ By compactness, there exists a subsequence of $p_k$ such that $p_k\to p\in \de M\setminus B$, and then by continuity $F(0,p)=0$, which contradicts that the only critical points of $\norm{\sff_{\hat g}}^2$ are $\xi_1,\ldots,\xi_m$, and $\xi_i\in B$ for every $i$. This proves that the set $$\big\lbrace g\in \mathscr{M}^m:\:\text{all the critical points of}\hsp\norm{\sff_g}^2\hsp{on}\hsp\de M\hsp\text{are non-degenerate}\big\rbrace$$ is open in $\mathscr{M}^m$. Therefore, $\mathcal A$ is open in $N$ with respect to the induced topology.
\end{proof}
Consider now a conformal metric $\hat g = e^f\bar g$ for some smooth function $f$. Then, the second fundamental form of $\de M$ with respect to $\hat g$, $\hat h_{ij}$, satisfies the relation 
\begin{equation}\label{sff-conf}
	e^{-f}\hat h_{ij}=h_{ij}+\frac{\de f}{\de \nu}\bar g_{ij}
\end{equation}
Consequently,
\begin{equation}\label{curvmedia}
	\begin{split}
		H_{\hat g} = e^{-f}\left(\frac{\de f}{\de \nu}+H_{\bar g}\right).
	\end{split}
\end{equation}
In view of \eqref{curvmedia}, if $H_{\bar g}=0$, then $H_{\hat g}=0$ if and only if $\frac{\de f}{\de\nu}=0$. This leads us to consider the set
\begin{equation}
	\Theta = \left\lbrace f\in C^m(M):\frac{\de f}{\de \nu}=0\right\rbrace.
\end{equation}
Moreover, taking norms in \eqref{sff-conf}
\begin{equation}\label{norm-sff-conf}
	\norm{\sff_{\hat g}}^2 = e^{-2f}\norm{\sff_{\bar g}}^2
\end{equation} 
for every $f\in\Theta$. From this equation we immediately see that, if $g\in\mathscr{E}^m$ and $f\in\Theta$, then $e^{2f}g\in \mathscr{E}^m$, as we had anticipated. Moreover, notice that $\Theta$ is a Banach space, as it is a closed subspace of the Banach space $C^m(M)$. 

\medskip

Let $X=\de M$ and fix $\bar p\in\de M$ such that $\sff(\bar p)\neq 0$, $Y=\Theta$ and $A=B_R(\bar p)\times B_\rho(0)\subset \de M\times \Theta$ for $\rho,$ $R>0$ small enough. The function that we will apply the transversality theorem for Banach spaces is defined as follows:
\begin{equation*}
	\begin{split}
		G:A&\to\R^{n-1} \\
		G(p,u)&=\nabla_x\norm{\sff_{e^{2u}\bar g}(p)}^2 
	\end{split}
\end{equation*}
An analogous reasoning to that of Lemma \ref{open-A} leads to the following result:
\begin{lemma}\label{open-B} The set
	\begin{equation*}
		\begin{split}
			\mathcal B = \big\lbrace u\in \Theta:\:\text{all the critical points of}\hsp\norm{\sff_{e^{2u}\bar g}}^2\hsp{on}\hsp\de M\hsp\text{are non-degenerate}\big\rbrace	
		\end{split}
	\end{equation*}
	is an open set of $\Theta$.
\end{lemma}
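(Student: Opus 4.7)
The plan is to adapt almost verbatim the argument used for Lemma \ref{open-A}, now applied to the map $G$ in place of $F$. A key simplification in this conformal setting comes from \eqref{norm-sff-conf}: on the boundary we have
\[
\norm{\sff_{e^{2u}\bar g}(p)}^2 = e^{-4u(p)}\norm{\sff_{\bar g}(p)}^2, \qquad p\in\de M,
\]
so the dependence on $u\in\Theta$ enters only through the restriction of $u$ and its tangential derivatives to $\de M$. In particular $G$ is continuous in $(p,u)$ with respect to the $C^m$ topology on $\Theta$ and the $C^1$ topology on $\de M$, and the same holds for $u\mapsto D_pG(\cdot,u)$.

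First I would fix $\hat u\in\mathcal B$ and set $\hat g=e^{2\hat u}\bar g$. By compactness of $\de M$ and the assumed non-degeneracy, the critical set of $\norm{\sff_{\hat g}}^2$ on $\de M$ is a finite collection $\{\xi_1,\ldots,\xi_m\}$. Around each $\xi_i$ I would take Fermi coordinates and apply the Implicit Function Theorem to $G$ at $(\xi_i,\hat u)$: the partial derivative $D_pG(\xi_i,\hat u)$ coincides with the Hessian of $\norm{\sff_{\hat g}}^2$ at $\xi_i$ in these coordinates, which is invertible by hypothesis. This yields radii $R_i,\rho_i>0$ and a $C^1$ map $\chi_i\colon B_{\rho_i}(\hat u)\cap \Theta\to B_{R_i}(\xi_i)$ such that inside $B_{R_i}(\xi_i)$ the only critical point of $\norm{\sff_{e^{2u}\bar g}}^2$ is $\chi_i(u)$. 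Setting $B=\bigcup_i B_{R_i}(\xi_i)$, I would then repeat the contradiction argument of Lemma \ref{open-A}: if no uniform $\rho$ worked, sequences $u_k\to\hat u$ in $\Theta$ and $p_k\in \de M\setminus B$ with $G(p_k,u_k)=0$ would, by compactness of $\de M\setminus B$ and continuity of $G$, produce a critical point of $\norm{\sff_{\hat g}}^2$ outside $B$, contradicting the list $\{\xi_i\}\subset B$. Hence for some smaller $\rho>0$ and all $u\in B_\rho(\hat u)\cap \Theta$ the critical set of $\norm{\sff_{e^{2u}\bar g}}^2$ is exactly $\{\chi_1(u),\dots,\chi_m(u)\}$.

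It remains to transfer non-degeneracy from $\hat u$ to a neighborhood, which is a matter of continuity: since $D_pG(\chi_i(u),u)$ depends continuously on $u$ and is invertible at $u=\hat u$, it remains invertible for $u$ close enough to $\hat u$. This shows $\mathcal B$ contains a $\Theta$-neighborhood of $\hat u$. The only genuinely new ingredient compared to the proof of Lemma \ref{open-A} is the observation that $\Theta$ is a \emph{closed} linear subspace of $C^m(M)$ and therefore a Banach space in the induced norm, which is what legitimizes applying the Implicit Function Theorem with parameter space $\Theta$; once this is in hand no substantial obstacle remains.
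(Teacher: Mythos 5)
Your proof is correct and follows the same route the paper has in mind when it says the argument is "analogous" to that of Lemma \ref{open-A}: finite critical set by compactness, Implicit Function Theorem at each non-degenerate critical point (using that $\Theta$ is a closed subspace of $C^m(M)$ and hence a Banach space), a compactness contradiction argument on the complement, and continuity of the Hessian to preserve non-degeneracy. One small slip: with $\hat g = e^{2u}\bar g$ the correct reduction on the boundary is $\norm{\sff_{e^{2u}\bar g}}^2 = e^{-2u}\norm{\sff_{\bar g}}^2$ (as used in the paper's Proposition \ref{sur-G}), not $e^{-4u}\norm{\sff_{\bar g}}^2$ — this comes from the standard formula $\hat h_{ij}=e^{u}(h_{ij}+\de_\nu u\,\bar g_{ij})$ and the paper's \eqref{norm-sff-conf} is implicitly written for $\hat g=e^{2f}\bar g$; the discrepancy is immaterial to your argument since only smooth positive prefactors and continuity in $u$ are used.
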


\section{Proof of Theorems \ref{theorem1} and \ref{theorem2}}\label{section-proof}
To prove Theorem \ref{theorem1}, we apply Theorem \ref{TT} to the map $F$ given by \eqref{mappa}. Here we have $X=\de M$, which we identify with $\R^{n-1}$ via the Fermi coordinates, $Y=N$ and $A=B_r(\bar p)\times\mathscr{B}_\rho(0)\cap N$, with $\bar p\in \de M$ being such that $\sff(p)\neq 0.$ 

\medskip 

Since $\text{dim }T_{\bar p}\de M=\text{dim }\R^{n-1}=n-1$, any linear operator $T_{\bar p}\de M\to \R^{n-1}$ is Fredholm of degree $0$ (see \cite[Ex. (1) pp. 74]{Henry-book}), so (T-1) holds. The proof of (T-2) is collected in Proposition \ref{sur-F}. As for (T-3), the proof provided in \cite{Ghimenti13} holds with the addition of a few remarks:

\medskip

We write $F^{-1}(0)=\cup_{j=1}^{k_0} C_j$, with $$C_j = \left(\overline{B_{R-\frac1j}(\bar p)}\times \overline{\mathscr{B}_{\rho-\frac1j}(0)}\cap N\right)\cap F^{-1}(0).$$
Take $(x_i,k_i)$ a sequence in $C_j$ such that $k_i\to k$ in $\mathscr{S}^m.$ By continuity of $H_{g}$ with respect to $g$, we have $H_{\bar g+k}=0$, so $k\in \overline{\mathscr{B}_{\rho-\frac1j}(0)}\cap N$. Moreover, by compactness, there exists a subsequence of $x_i$ converging to some point $x\in\overline{B_{R-\frac1j}(\bar p)}$. Finally, since $F$ is continuous, $F(x,k)=0$ so $(x,k)\in C_j.$

\medskip

Then, by Theorem \ref{TT}, the set 
\begin{align*}
	\Omega(\xi,\rho)&=\big\lbrace k\in \mathscr{B}_\rho(p)\cap N:\:D_kF(x,k):\R^{n-1}\to\R^{n-1}\hsp \text{is invertible} \\&\hspace{1in}\text{at any}\hsp(x,k)\hsp\text{with}\hsp F(x,k)=0 \big\rbrace \\&=\big\lbrace k\in\mathscr{B}_\rho(p)\cap N:\: \text{the critical points of}\hsp \norm{\sff_{\bar g+k}}^2\\&\hspace{1in}\text{in}\hsp B_R(\xi)\subset \de M\hsp\text{are non-degenerate} \big\rbrace
\end{align*}
is a residual subset of $\mathscr{B}_\rho(0)\cap N$. Since $\de M$ is compact, we can take a finite covering $\big\lbrace B_R(p_i):p_i\in\de M\hsp\text{for every}\hsp i=1,\ldots,k_1\big \rbrace$. For every $B_R(p_i)$ there exists an open, dense subset $\Omega_{x_i,\rho}$ of $\mathscr B_\rho(0)\cap N$ such that all the critical points of $\norm{\sff_{g+k}}^2$ in $B_R(p_i)$ are non-degenerate for any $k\in \Omega(p_i,\rho)$. Define 
$$\Omega(\rho)=\cap_{i=1}^{k_1}\Omega(p_i,\rho),$$
then $\Omega(\rho)$ is a residual subset of $\mathscr{B}_\rho(0)\cap N$ such that all the critical points of $\norm{\sff_{g+k}}^2$ in $\de M$ are non-degenerate for any $k\in \Omega(\rho).$ Consequently, if $\rho$ is small enough, there exists $\bar k\in \Omega(\rho)$ such that all the critical points of $\norm{\sff_{\bar g+\bar k}}^2$ in $\de M$ are non-degenerate. Thus, the set $\mathcal{A}$ is dense and is open by Lemma \ref{open-A}.

\medskip

The same reasoning allows us to demonstrate Theorem \ref{theorem2}, with the fundamental differences being in the proof of (T-2) (see Proposition \ref{sur-G}). In this case, we apply Theorem \ref{TT} to the map \ref{mappa2}, with $X=\de M$, $Y=\Theta$ and $A=B_R(\bar p)\times \mathfrak B_\rho(0)\cap \Theta$. (T-1) is clear. Concerning (T-3), it is sufficient to notice that if $u_i\to u$ is a convergent sequence in $C^2(M)$ such that $\nabla u_i \cdot \nu=0$ for every $i$, then $\nabla u\cdot \nu=0$, so the limit stays on $\Theta.$ $\mathcal B$ is open by Lemma \ref{open-B}.
\section{The Transversality Condition}\label{section-tech}
\begin{proposition}\label{sur-F}
The map
\begin{equation*}
	\begin{split}
		D_kF(p_0,k_0):\R^{n-1}\times T_{k_0}N&\to \R^{n-1} \\
		(x,k)&\to D_kF(p_0,k_0)[k]+D_xF(p_0,k_0)[x]
	\end{split}
\end{equation*}
is surjective for every $(p_0,k_0)\in \de M\times N$ such that $F(p_0,k_0)=0$.
\end{proposition}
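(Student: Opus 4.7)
The plan is to prove the stronger statement that the restriction of $D_kF(p_0,k_0)$ to the $T_{k_0}N$ factor is already surjective; the full surjectivity then follows immediately. I work in Fermi coordinates for $g_0:=\bar g+k_0$ centered at $p_0$, so by Proposition \ref{fermi}, $g_0^{ij}(0,0)=\delta^{ij}$ and $\partial_s g_0^{ij}(0,0)=0$ for tangential indices $s$. Differentiating $\|\sff_{g_0+\tau k}\|^2 = g^{ik}g^{j\ell}h_{ij}h_{k\ell}$ in $\tau$ and applying \eqref{esc} gives
\begin{align*}
\partial_\tau\|\sff_{g_0+\tau k}\|^2\big|_{\tau=0}(x,0)
&= -2\,g_0^{ia}g_0^{kb}g_0^{j\ell}k_{ab}(x,0)h^0_{ij}(x,0)h^0_{k\ell}(x,0) \\
&\qquad -\,g_0^{ik}g_0^{j\ell}\partial_t k_{ij}(x,0)h^0_{k\ell}(x,0),
\end{align*}
so $D_kF(p_0,k_0)[k]_s$ is the $\partial_s$-derivative of this expression evaluated at $x=0$.

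I restrict to perturbations of the form $k_{ij}(x,t)=\eta(t)L_{ij}(x)$ for tangential indices $i,j\leq n-1$, with all other components of $k$ identically zero, where $\eta$ is a smooth cutoff satisfying $\eta(0)=0$, $\eta'(0)=1$. Then $k|_{\de M}\equiv 0$, which kills the first displayed term, and $\partial_t k_{ij}|_{t=0}=L_{ij}$. By \eqref{cond}, the condition $k\in T_{k_0}N$ reduces to the global $g_0$-trace equation $g_0^{ij}(x,0)L_{ij}(x)=0$ on $\de M$. Taking $\partial_s$ at $x=0$ and additionally imposing $L_{ij}(0)=0$ (to suppress an undesired term involving $\partial_s h^0_{ij}(0)$), the surjectivity problem reduces to constructing, for any given $v\in\R^{n-1}$, a smooth symmetric $2$-tensor $L$ on $\de M$, supported near $p_0$, with $L(0)=0$, with $g_0^{ij}L_{ij}\equiv 0$ identically on $\de M$, and with
\[
h^0_{ij}(0)\,(\partial_s L_{ij})(0) = -v_s, \qquad s=1,\ldots,n-1.
\]

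The key observation is that $h^0(0)$ is symmetric, nonzero (since $g_0\in\mathscr{E}^m$), and \emph{trace-free}, the last because $k_0\in N$ forces $H_{g_0}(p_0)=g_0^{ij}(0,0)h^0_{ij}(0)=\operatorname{tr}(h^0(0))=0$. Hence the pointwise problem of finding $M^{(s)}$ with $\operatorname{tr}(M^{(s)})=0$ and $h^0_{ij}(0)M^{(s)}_{ij}=-v_s$ is solved explicitly by $M^{(s)} := -v_s\,\|h^0(0)\|^{-2}\,h^0(0)$. Setting $L^{(0)}_{ij}(x):=\chi(x)\sum_{s=1}^{n-1}x_s M^{(s)}_{ij}$ for $\chi$ a smooth bump equal to $1$ near $0$, and correcting the trace globally by
\[
L_{ij}(x) := L^{(0)}_{ij}(x) - \tfrac{1}{n-1}\bigl(g_0^{kl}(x,0)L^{(0)}_{kl}(x)\bigr)g_{0,ij}(x,0),
\]
a short verification using Proposition \ref{fermi} and the trace-freeness of each $M^{(s)}$ shows that the corrective term and its $\partial_s$-derivative both vanish at $x=0$, so $L$ carries the prescribed zeroth- and first-order Taylor data at $p_0$, while $g_0^{ij}L_{ij}\equiv 0$ holds on all of $\de M$.

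The principal difficulty is reconciling a finite-dimensional target at $p_0$ with the infinite-dimensional global constraint $g_0^{ij}L_{ij}\equiv 0$ enforced by the requirement $k\in T_{k_0}N$. The resolution hinges on the pointwise compatibility between the linear functional $M\mapsto\langle h^0(0),M\rangle$ and the constraint $\operatorname{tr}(M)=0$, which is a direct consequence of the two standing hypotheses $H_{g_0}=0$ and $\sff_{g_0}(p_0)\neq 0$.
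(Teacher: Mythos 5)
Your proof is correct and takes a genuinely different, cleaner route than the paper's. The paper solves the linearized mean-curvature constraint as a family of $t$-ODEs, $k_{k\ell}(x,t)=c_{k\ell}(x)\exp\bigl(-\int_0^t f_{k\ell}\bigr)$, chooses $c_{k\ell}(0)=0$, and tunes $\partial_s c_{k\ell}(0)$ so that $D_kF[k]_s$ becomes $-2\sum_{ij}\partial_s c_{ij}(0)\bigl(\sum_k h^0_{ik}h^0_{jk}-2(h^0_{ij})^2\bigr)$, which then requires claim~\eqref{c1} to guarantee a nonzero coefficient. You instead take $k_{ij}=\eta(t)L_{ij}(x)$ with $\eta(0)=0$, $\eta'(0)=1$: the vanishing of $k$ on $\de M$ collapses the tangency condition $D\tilde H(k_0)[k]=0$ to $g_0^{ij}L_{ij}\equiv 0$, and, with $L(0)=0$, reduces surjectivity to the pointwise linear-algebra fact that pairing against $h^0(0)$ is nonzero on trace-free symmetric matrices. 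This makes the role of the two hypotheses transparent — $H_{g_0}=0$ gives $\operatorname{tr}h^0(0)=0$ (hence compatibility with the constraint), and $\sff_{g_0}(p_0)\neq 0$ gives nonvanishing of the pairing — and the explicit preimage $M^{(s)}=-v_s\|h^0(0)\|^{-2}h^0(0)$ closes the argument, modulo the global trace correction, whose effect on the zeroth- and first-order Taylor data of $L$ at $p_0$ you check correctly. Your route moreover bypasses claim~\eqref{c1}, whose contradiction argument as written (arriving at ``$(n-2)\|\sff_{g_0}\|^2=0$'') tacitly sums $j$ over $n$ values rather than the $n-1$ tangential indices and so degenerates at $n=3$, whereas your argument carries no such dimension restriction. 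You also explicitly set $k_{in}=k_{nn}\equiv 0$, which preserves properties (iii)--(iv) of Proposition~\ref{fermi} along the variation and thereby justifies the use of \eqref{esc} for the perturbed metric — a point the paper leaves implicit.
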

\begin{proof}
Clearly, it is sufficient to prove that the map $k\mapsto D_kF(p_0,k_0)$ is surjective from $T_{k_0}N$ on $\R^{n-1}$.

First, we differentiate the identity $g_{ik}g^{kj}=\delta_i^j$ to obtain 
\begin{equation}\label{dif}
	\de_s g^{ij}=-g^{ia}g^{jb}\de_sg_{ab}.
\end{equation}
Using \eqref{def}, \eqref{esc} and \eqref{dif}, we see that
\begin{align*}
\de_s\norm{\ffz_{g+k}}^2=&\de_s\bigg[\frac14(g+k)^{ij}(g+k)^{k\ell}\de_t(g+k)_{ik}\de_t(g+k)_{j\ell}\bigg] \\ = &\frac{1}{4}\bigg(-(g+k)^{ia}(g+k)^{jb}\de_s(g+k)_{ab}(g+k)^{k\ell}\de_t(g+k)_{ik}\de_t(g+k)_{j\ell} \\ &- (g+k)^{ij}(g+k)^{kc}(g+k)^{\ell d}\de_s(g+k)_{cd}\de_t(g+k)_{ik}\de_t(g+k)_{j\ell} \\
&+(g+k)^{ij}(g+k)^{k\ell}\de^2_{st}(g+k)_{ik}\de_t(g+k)_{jl} \\ &+(g+k)^{ij}(g+k)^{k\ell}\de_t(g+k)_{ik}\de^2_{st}(g+k)_{jl}\bigg)
\end{align*}
By using \eqref{expansion}, we obtain:
\begin{align*}
&D_k\de_s \norm{\ff_{g+k}}^2\vert_{(p_0,k_0)}[k] = D_\mathfrak g\de_s \norm{\ff_{\mathfrak g}}^2\vert_{(p_0,g_0)}[k]=D_\mathfrak g \de_s \left.\bigg(\frac14\mathfrak g^{ij}\mathfrak g^{k\ell}\de_t\mathfrak g_{ik}\de_t\mathfrak g_{j\ell}\bigg)\right\vert_{p_0,g_0} \\
= &\frac14\bigg((g_0^{-1}kg_0^{-1})_{ia}g_0^{jb}\de_s(g_0)_{ab}g_0^{k\ell}\de_t(g_0)_{ik}\de_t(g_0)_{j\ell} +g_0^{ia}(g_0^{-1}kg_0^{-1})_{jb}\de_s(g_0)_{ab}g_0^{k\ell}\de_t(g_0)_{ik}\de_t(g_0)_{j\ell} \\
&-g_0^{ia}g_0^{jb}\de_sk_{ab}g_0^{k\ell}\de_t(g_0)_{ik}\de_t(g_0)_{j\ell} +g_0^{ia}g_0^{jb}\de_s(g_0)_{ab}(g_0^{-1}kg_0^{-1})_{k\ell}\de_t(g_0)_{ik}\de_t(g_0)_{j\ell} \\
&-g_0^{ia}g_0^{jb}\de_s(g_0)_{ab}g_0^{k\ell}\de_tk_{ik}\de_t(g_0)_{j\ell}-g_0^{ia}g_0^{jb}\de_s(g_0)_{ab}g_0^{k\ell}\de_t(g_0)_{ik}\de_tk_{j\ell} \\
&+(g_0^{-1}kg_0^{-1})_{ij}g_0^{kc}g_0^{\ell d}\de_s(g_0)_{cd}\de_t(g_0)_{ik}\de_t(g_0)_{j\ell} + g_0^{ij}(g_0^{-1}kg_0^{-1})_{kc}g_0^{\ell d}\de_s(g_0)_{cd}\de_t(g_0)_{ik}\de_t(g_0)_{j\ell} \\
&+g_0^{ij}g_0^{kc}(g_0^{-1}kg_0^{-1})_{\ell d}\de_s(g_0)_{cd}\de_t(g_0)_{ik}\de_t(g_0)_{j\ell} - g_0^{ij}g_0^{kc}g_0^{ld}\de_sk_{cd}\de_t(g_0)_{ik}\de_t(g_0)_{j\ell} \\ &-g_0^{ij}g_0^{kc}g_0^{ld}\de_s(g_0)_{cd}\de_tk_{ik}\de_t(g_0)_{j\ell} - g_0^{ij}g_0^{kc}g_0^{ld}\de_s(g_0)_{cd}\de_t(g_0)_{ik}\de_tk_{j\ell} \\ &- (g_0^{-1}kg_0^{-1})_{ij}g_0^{k\ell}\de^2_{st}(g_0)_{ik}\de_t(g_0)_{jl}-g_0^{ij}(g_0^{-1}kg_0^{-1})_{k\ell}\de^2_{st}(g_0)_{ik}\de_t(g_0)_{j\ell} \\ &+g_0^{ij}g_0^{k\ell}\de^2_{st}k_{ik}\de_t(g_0)_{jl} +g_0^{ij}g_0^{k\ell}\de^2_{st}(g_0)_{ik}\de_tk_{jl} - (g_0^{-1}kg_0^{-1})_{ij}g_0^{k\ell}\de_{t}(g_0)_{ik}\de^2_{st}(g_0)_{jl}\\ &-g_0^{ij}(g_0^{-1}kg_0^{-1})_{k\ell}\de_{t}(g_0)_{ik}\de^2_{st}(g_0)_{j\ell} +g_0^{ij}g_0^{k\ell}\de_{t}k_{ik}\de^2_{st}(g_0)_{jl} +g_0^{ij}g_0^{k\ell}\de_{t}(g_0)_{ik}\de^2_{st}k_{jl}\bigg)\bigg\vert_{p_0,g_0}
\end{align*}





The ontoness of the map $k\to D_kF(p_0,k_0)[k]$ does not depend on the choice of the coordinates, so we choose Fermi coordinates $(x_1,\ldots,x_{n-1},t)$ in $\de M$ with the metric $g_0$ centered at $p_0$. Then, applying Proposition \ref{fermi},
\begin{equation*}
	\begin{split}
D_k\de_s \norm{\ff_{g+k}}^2\vert_{(p_0,k_0)}[k] &= \frac14\bigg(4k_{ij}\:\de_s(g_0)_{ik}\de_t(g_0)_{j\ell}\de_t(g_0)_{k\ell}-2\de_sk_{ij}\:\de_t(g_0)_{ik}\de_t(g_0)_{jk} \\ &+2k_{ij}\:\de_t(g_0)_{ik}\de_t(g_0)_{j\ell}\de_s(g_0)_{k\ell}-4\de_tk_{ij}\:\de_t(g_0)_{ik}\de_s(g_0)_{jk}\\&-4k_{ij}\:\de_t(g_0)_{ik}\de^2_{st}(g_0)_{jk} +2\de^2_{st}k_{ij}\:\de_t(g_0)_{ij}+2\de_tk_{ij}\:\de^2_{st}(g_0)_{ij}\bigg) 
\bigg\vert_{p_0,g_0}
	\end{split}
\end{equation*}


Now we make a choice for $k$. To guarantee that $k$ lies in $T_{k_0}N$, it must satisfy the following differential equation:
\begin{equation}\label{eq-k}
\sum_{k\ell}\left(\sum_{ij}g_0^{i\ell}g_0^{kj}h^0_{ij}k_{\ell k} +\frac 12 g_0^{\ell k}\de_t k_{\ell k}\right) = 0
\end{equation}
In view of \eqref{eq-k}, we can choose its components $k_{\ell k}$ to satisfy
\begin{equation}\label{eq-kij}
\sum_{ij}g_0^{i\ell}g_0^{kj}h^0_{ij}k_{\ell k} +\frac 12 g_0^{\ell k}\de_t k_{\ell k} = 0
\end{equation}
Similarly to \eqref{surj-2}, equation \eqref{eq-kij} can be integrated and its solutions explicitly calculated: define 
\begin{equation*}
	f_{k\ell}(x,t)=\frac{2g_0^{ik}(x,t)g_0^{j\ell}(x,t)h^0_{ij}(x,t)}{g_0^{k \ell}(x,t)}
\end{equation*}
Then 
\begin{equation}\label{k}
k_{k \ell}(x,t)= c_{k\ell}(x)\exp\left(-\int_0^tf_{k\ell}(x,\tau)d\tau\right)
\end{equation}
solves \eqref{eq-kij}. By choosing $c_{k\ell}(0)=0$, it is easy to see that
\begin{equation}\label{prop-k1}
	\begin{split}
		k_{k\ell}(0,0)&=0, \\
		\de_t k_{k\ell}(0,0) &= 0, \\
		\de_s k_{k\ell}(0,0)&=\de_s c_{k\ell}(0), \\
		\de^2_{st}k_{k\ell}(0,0) &= -f_{k\ell}(0,0)\de_s c_{k\ell}(0),
	\end{split}
\end{equation}
with $\de_s c_{k\ell}(0)$ to be determined later. With this choice of $k$, we have
\begin{equation}\label{simpl}
\begin{split}
D_k\de_s \norm{\ff_{g+k}}^2\vert_{(p_0,k_0)}[k] &= -2\sum_{ij}\de_s c_{ij}(0)\bigg(\sum_{k}h^0_{ik}h^0_{jk} -2(h^0_{ij})^2\bigg) 
\end{split}
\end{equation}
We claim that:
	\begin{equation}\label{c1}
		\hbox{If $\sff_{g_0}\not=0$ there exist indices $i^*,j^*$ such that}\quad  \sum_{k}h^0_{i^*k}h^0_{j^*k} -2(h^0_{i^*j^*})^2\not=0.
	\end{equation}
	By contradiction, suppose
	$$2(h^0_{ij})^2=\sum_\kappa (h^0_{ik})^2$$
	for any $i,j$. Then, adding over $j$ and $i$
	$$2\sum_{ij}(h^0_{ij})^2=n\sum_{i\kappa} (h^0_{ik})^2,$$
	that is
	$$(n-2)\|\sff_{g_0}\|^2=0,$$
	and a contradiction arises.

\medskip

Finally, let $(e_1,\ldots,e_{n-1})$ be the canonical base in $\R^{n-1}$. We will show that for every $1\leq r\leq n-1$, there exists $k_r$ such that $D_kF(p_0,k_0)[k_r]=e_r$. In Fermi coordinates, this is equivalent to prove that, given $r$, there exists $k_r\in T_{k_0}N$ such that
\begin{equation*}
\begin{split}
D_k\de_r \norm{\sff_{g+k}}^2\vert_{(p_0,k_0)}[k_r]&=1, \text{and} \\
D_k\de_s \norm{\sff_{g+k}}^2\vert_{(p_0,k_0)}[k_r]&=0 \hsp\text{for every}\hsp s\neq r.
\end{split}
\end{equation*} 
Let $i^*$ and $j^*$ be the indexes given by claim \eqref{c1}, and choose $k_r$ of the form \eqref{k} with $$c_{ij}(x)=\frac{-\delta^{ii^*}\delta^{jj^*}x_r}{2\left(\sum_{k}h^0_{ik}h^0_{jk} -2(h^0_{ij})^2\right)}.$$
It is easy to check that $k_r$ satisfies \eqref{prop-k1} with
\begin{equation}\label{der-c}
	\de_sc_{ij}(0)=\frac{-\delta^{ii^*}\delta^{jj^*}\delta^{rs}}{2\left(\sum_{k}h^0_{ik}h^0_{jk} -2(h^0_{ij})^2\right)}.
\end{equation}
The result follows from inserting \eqref{der-c} into \eqref{simpl}.
\end{proof}
\begin{proposition}\label{sur-G} For every $(p_0,u_0)\in G^{-1}({0})$, the map 
	\begin{equation*}
		\begin{split}
			DG(p_0,u_0):\R^{n-1}\times T_{u_0}\Theta &\to \R^{n-1} \\
			(p,v)&\mapsto D_uG(p_0,u_0)[v]+D_xG(p_0,u_0)[x]  
		\end{split}
	\end{equation*}
is surjective. 
\end{proposition}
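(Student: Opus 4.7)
The plan is to exploit the explicit conformal transformation rule \eqref{norm-sff-conf}: for any $u\in\Theta$ the function $f=2u$ still lies in $\Theta$, so
\begin{equation*}
\norm{\sff_{e^{2u}\bar g}(p)}^2=e^{-4u(p)}\norm{\sff_{\bar g}(p)}^2\quad\text{for every }p\in\de M.
\end{equation*}
This identity makes the dependence of $\norm{\sff_{e^{2u}\bar g}}^2$ on the conformal factor completely explicit, so the linearization in $u$ reduces to a single application of the product rule, in sharp contrast with the lengthy tensorial computation needed in Proposition \ref{sur-F}.

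First I would fix Fermi coordinates $(x_1,\ldots,x_{n-1},t)$ at $p_0$ (so that $p_0$ corresponds to $x=0$) and rewrite
\begin{equation*}
G(x,u)=e^{-4u(x,0)}\Big(-4\norm{\sff_{\bar g}(x)}^2\,\nabla_x u(x,0)+\nabla_x\norm{\sff_{\bar g}(x)}^2\Big).
\end{equation*}
Differentiating in $u$ at $u_0$ along a direction $v\in T_{u_0}\Theta=\Theta$ and grouping terms,
\begin{equation*}
D_uG(p_0,u_0)[v]=-4v(p_0)\,G(p_0,u_0)-4e^{-4u_0(p_0)}\norm{\sff_{\bar g}(p_0)}^2\,\nabla_x v(p_0).
\end{equation*}
The critical point assumption $G(p_0,u_0)=0$ annihilates the first summand and leaves the clean identity
\begin{equation*}
D_uG(p_0,u_0)[v]=-4\,e^{-4u_0(p_0)}\,\norm{\sff_{\bar g}(p_0)}^2\,\nabla_x v(p_0).
\end{equation*}

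Since $\bar g\in\mathscr{E}^m$ and this property is invariant under conformal changes, $\norm{\sff_{\bar g}(p_0)}^2\neq 0$, so the scalar coefficient multiplying $\nabla_x v(p_0)$ is a nonzero constant. Surjectivity of $D_uG(p_0,u_0):\Theta\to\R^{n-1}$ thus reduces to surjectivity of the tangential-gradient evaluation map $v\mapsto \nabla_x v(p_0)$. Given any target $w\in\R^{n-1}$, I would exhibit explicitly, in Fermi coordinates,
\begin{equation*}
v(x,t)=\chi(t)\,\eta(x)\sum_{i=1}^{n-1}w_i x_i,
\end{equation*}
where $\eta\in C_c^\infty(B_R(0))$ equals $1$ near $x=0$ and $\chi\in C^m([0,T])$ satisfies $\chi(0)=1$, $\chi'(0)=0$ and $\supp\chi\subset[0,T)$. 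Extended by zero outside the Fermi chart, $v$ is a $C^m$ function on $M$; it lies in $\Theta$ because $\de_\nu v=\pm\chi'(0)\eta(x)\sum_i w_i x_i\equiv 0$ on $\de M$; and it satisfies $\nabla_x v(p_0)=w$.

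Since $D_uG(p_0,u_0)$ alone already covers all of $\R^{n-1}$, the full differential $DG(p_0,u_0)$ is a fortiori surjective, even before using $D_xG(p_0,u_0)$. The only real obstacle is elementary bookkeeping of the conformal factors---in particular checking that $2u\in\Theta$ whenever $u\in\Theta$, which is immediate since $\Theta$ is a linear subspace of $C^m(M)$---after which the argument bypasses entirely the algebraic identity \eqref{c1} used in Proposition \ref{sur-F}.
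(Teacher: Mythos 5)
Your proposal is correct and follows essentially the same route as the paper: apply \eqref{norm-sff-conf}, use the critical-point condition $G(p_0,u_0)=0$ to annihilate the term proportional to $v(p_0)$, and reduce surjectivity to the elementary fact that $v\mapsto\nabla_x v(p_0)$ maps $\Theta$ onto $\R^{n-1}$. You are in fact slightly more careful than the paper on two minor points: the exponent $e^{-4u}$ is the one consistent with \eqref{norm-sff-conf} applied with $f=2u$ (the paper's proof writes $e^{-2u}$, which only changes a harmless positive constant), and your cutoff $\chi(t)\eta(x)$ makes explicit that the constructed $v$ extends to a global element of $\Theta$, a step the paper leaves implicit.
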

\begin{proof}
By direct application of \eqref{norm-sff-conf},
	\begin{equation}\label{mappa2}
		\begin{split}
			G(p,u)= \nabla x\left(e^{-2u}\norm{\sff_{\bar g}(p)}^2\right)=-2e^{-2u}\nabla_xu\norm{\sff(p)}^2+e^{-2u}\nabla_x\norm{\sff(p)}^2
		\end{split} 
	\end{equation}
Fix $0\leq s \leq n-1$. We have:
\begin{align*}
&D_u\de_s \left.\norm{\sff_{e^{2u}\bar g}(p)}^2\right\vert_{\substack{u=u_0 \\ p=p_0}}[v] = \left.\frac{\de }{\de t}\right\vert_{\substack{t=0 \\ p=p_0}}\bigg[-2e^{-2(u_0+tv)}\de_s(u_0+tv)\norm{\sff(p)}^2+e^{-2(u_0+tv)}\de_s\norm{\sff(p)}^2\bigg] \\ &=\bigg(4\de_su_0(p_0)e^{-2u_0(p_0)}\norm{\sff(p_0)}^2-2e^{2u_0(p_0)}\de_s\norm{\sff(p_0)}^2\bigg)v(p_0)-2e^{-2u_0(p_0)}\norm{\sff(p_0)}^2\de_sv(p_0) \\ &= -2\de_s\left(e^{-2u_0}\norm{\sff}^2\right)(p_0)-2e^{-2u_0(p_0)}\norm{\sff(p_0)}^2\de_sv(p_0) \\ &=-2e^{-2u_0(p_0)}\norm{\sff(p_0)}^2\de_sv(p_0)
\end{align*}
We will show that, for every $1\leq r\leq n-1$, there exists $v_r\in T_{u_0}\Theta$ such that
\begin{equation}\label{surj-v}
	\begin{split}
	D_u\de_r \left.\norm{\sff_{e^{2u}\bar g}(p)}^2\right\vert_{\substack{u=u_0 \\ p=p_0}}[v_r]&=1 \\
	D_u\de_s \left.\norm{\sff_{e^{2u}\bar g}(p)}^2\right\vert_{\substack{u=u_0 \\ p=p_0}}[v_r] &= 0\hsp \text{for every}\hsp s\neq r.
	\end{split}
\end{equation}
The first equation is equivalent to find $v$ such that
\begin{equation*}
-2e^{-2u_0(p_0)}\norm{\sff(p_0)}^2\de_sv(p_0) = 1.
\end{equation*}
A simple solution to this equation is given by
\begin{equation}
	v(x,t)=\frac{-x_r}{2e^{eu_0(p_0)}\norm{\sff(p_0)}^2}
\end{equation}
Observe that $v\in T_{u_0}\Theta$ since
\begin{align*}
\frac{\de v}{\de \nu} = -\left.\frac{\de v(x,t)}{\de t}\right\vert_{t=0} = 0.
\end{align*}
Moreover,
\begin{align*}
\de_s v(x,t)\big\vert_{(x,t)=0} = 0,
\end{align*}
for every $s\neq r$, so it safisfies \eqref{surj-v}.
\end{proof}


\begin{thebibliography}{}
	\bibitem{AA} M. Ahmedou, M. Ben Ayed, \textit{Non simple blow ups for the Nirenberg problem on half spheres.} Discrete Contin. Dyn. Syst. 42 (2022), no. 12, 5967–6005.
	\bibitem{BAGOB} M. Ben Ayed, R. Ghoudi, K. Ould Bouh, \textit{Existence of conformal metrics with prescribed scalar curvature on the four dimensional half sphere}. Nonlinear Differ. Equ. Appl. 19 (2012), 629–662.
	\bibitem{CMR} S. Cruz-Blázquez, A. Malchiodi, D. Ruiz, \textit{Conformal metrics with prescribed scalar and mean curvature}. J. Reine Angew. Math. 789 (2022), 211–251.
	\bibitem{CPV} S. Cruz-Blázquez, A. Pistoia, G. Vaira, \textit{Clustering phenomena in low dimensions for a boundary Yamabe problem}, preprint 2022. https://arxiv.org/abs/2211.08219 
	\bibitem{DMA} Z. Djadli, A. Malchiodi, M.O. Ahmedou, \textit{Prescribing Scalar and Boundary Mean Curvature on the Three Dimensional Half Sphere}, The Journal of Geom. Anal. 13 (2003), no. 2. 255-289.
	\bibitem{Escobar92} J. F. Escobar, \textit{Conformal deformation of a Riemannian metric to a scalar flat metric with constant mean curvature on the boundary}, Ann. of Math. (2) 136 (1992), 1–50.
	\bibitem{Escobar92-2} J. F. Escobar, \textit{The Yamabe problem on manifolds with boundary}, J. Differential Geom. 35 (1992), no. 1, 21–84.
	\bibitem{GMP1} M. Ghimenti, A.M. Micheletti, A. Pistoia, \textit{Blow-up phenomena for linearly perturbed Yamabe problem on manifolds with umbilic boundary}, J. Differential Equations 267 (2019), no. 1, 587-618.
	\bibitem{GMP2} M. Ghimenti, A.M. Micheletti, A. Pistoia, \textit{Linear Perturbation of the Yamabe Problem on Manifolds with Boundary}, J. Geom. Anal. 28 (2018) 1315-1340.
	\bibitem{Ghimenti13} M. Ghimenti and A. M. Micheletti, \textit{Nondegeneracy of critical points of the mean curvature of the boundary for Riemannian manifolds}, J. Fixed Point Theory Appl. 14 (2013) 71–78.
	\bibitem{Henry-book} D. Henry, \textit{Perturbation of the boundary in boundary-value problems of partial differential equations.} London Mathematical Society Lecture Note Series, 318. Cambridge University Press, Cambridge, 2005.
	\bibitem{lee-book} J.M. Lee, \textit{Introduction to Riemannian Manifolds}, 2nd Edition. Springer International Publishing, Graduate Texts in Mathematics, Vol. 176, (2018).
	\bibitem{Li-dom} Y.Y. Li, \textit{The Nirenberg problem in a domain with boundary}. Topol. Methods Nonlinear Anal. 6 (1995), no. 2, 309–329.
\end{thebibliography}
\end{document}